\newtheorem{defin}{}
\newtheorem{saetze}[defin]{}
\newtheorem{questi}[defin]{}
\newtheorem{lemmas}[defin]{}
\newtheorem{folger}[defin]{}
\newtheorem{bemerk}[defin]{}
\newtheorem{conjecture}{Conjecture}
\newenvironment{theorem}  {\begin{saetze}\it {\bf Theorem:}}{\end{saetze}}
\newenvironment{question} {\begin{questi}\it {\bf Question:}}{\end{questi}}
\newenvironment{lemma}    {\begin{lemmas}\it {\bf Lemma:}}{\end{lemmas}}
\newenvironment{remark}   {\begin{bemerk}\rm {\it Remark:}}{\end{bemerk}}
\newenvironment{proof}    {{\it Proof}:}{{\hfill \fillbox \bigskip}}
\newcommand{\fillbox}{\mbox{$\bullet$}}
\newcommand{\nlab}{\overline}
\newcommand{\ra}{\rightarrow}
\newcommand{\N}{\mathbb N}
\newcommand{\G}{\mathcal G}
\newcommand{\T}{\mathcal T}
\renewcommand{\L}{\mathcal L}
\newcommand{\M}{\mathcal M}
\newcommand{\Ann}{\operatorname{Ann}}
\newenvironment{items}{\begin{list}{$\alph{item})$}
{\labelwidth18pt \leftmargin18pt \topsep3pt \itemsep1pt \parsep0pt}}
{\end{list}}
\newcommand{\bulit}{\item[$\bullet$]}
\begin{document}

\title{Coclass theory for nilpotent semigroups \\ 
       via their associated algebras}
\author{Andreas Distler\footnote{The first author is supported by 
  the project PTDC/MAT/101993/2008 of Centro de \'Algebra da Universidade 
  de Lisboa, financed by FCT and FEDER.} $\;$ and Bettina Eick}
\date{\today}
\maketitle

\begin{abstract}
Coclass theory has been a highly successful approach towards the 
investigation and classification of finite nilpotent groups. Here we
suggest a similar approach for finite nilpotent semigroups. This 
differs from the group theory setting in that we additionally 
use certain algebras associated to the considered semigroups. We propose 
a series of conjectures on our suggested approach. If these become 
theorems, then this would reduce the classification of nilpotent semigroups 
of a fixed coclass to a finite calculation. Our conjectures are supported 
by the classification of nilpotent semigroups of coclass 0 and 1. 
Computational experiments suggest that the conjectures also hold for the
nilpotent semigroups of coclass 2 and 3.
\end{abstract}

\section{Introduction}

A semigroup $O$ or an associative algebra $O$ is called {\em nilpotent} if 
there exists an integer $c$ so that every product of $c+1$ elements equals 
zero. The least integer $c$ with this property is the {\em class} $cl(O)$ 
of $O$; equivalently, the class of $O$ is the length of series of powers
\[ O > O^2 > \ldots > O^c > O^{c+1} = \{0\}. \]
The {\em coclass} of a finite nilpotent semigroup $O$ with $n$ non-zero 
elements or a finite dimensional nilpotent algebra $O$ of dimension $n$ 
is defined via $cc(O) = n-cl(O)$.

For a semigroup $S$ and a field $K$ we denote with $K[S]$ the semigroup 
algebra defined by $K$ and $S$. This is an associative algebra of dimension
$|S|$. If $S$ has a zero element $z$, then the subspace $U$ of $K[S]$ 
generated by $z$ is an ideal in $K[S]$. We call $K[S]/U$ the {\em 
contracted semigroup algebra} defined by $K$ and $S$ and denote it by 
$KS$. If $S$ is a finite nilpotent semigroup, then $KS$ is a nilpotent 
algebra of the same class and coclass as $S$.

Our first aim in this note is to suggest a general approach towards a
classification up to isomorphism of nilpotent semigroups of a fixed 
coclass. For this purpose we choose an arbitrary field $K$ and we define
a directed labelled graph $\G_{r, K}$ as follows: the vertices of $\G_{r, K}$ 
correspond one-to-one to the isomorphism types of algebras $KS$ for the 
nilpotent semigroups $S$ of coclass $r$; two vertices $A$ and $B$ are 
adjoined by a directed edge $A \ra B$ if $B/B^c \cong A$, where $c$ is 
the class of $B$; each vertex $A$ in $\G_{r,K}$ is labelled by the 
number of isomorphism types of semigroups $S$ of coclass $r$ 
with $A \cong KS$. Illustrations of parts of such graphs can be found 
as Figure~\ref{figcc1} on page~\pageref{cc1} and as Figure~\ref{figcc2d2}
on~\pageref{cc2}.

We have investigated various of the graphs $\G_{r,K}$ and we observed that
all these graphs share the same general features. We formulate a sequence of 
conjectures and theorems describing these features. If our conjectures become 
theorems, then this would provide the ground for a new approach towards the 
classification and investigation of nilpotent semigroups by coclass. In 
particular, it would show how the classification of the infinitely many 
nilpotent semigroups of a fixed coclass reduces to a finite calculation.

As a second aim in this note we exhibit some graphs $\G_{r,K}$ explicitly 
to illustrate our conjectures. We have determined the graphs $\G_{0,K}$ and 
$\G_{1,K}$ for all fields $K$ using the classification of the nilpotent 
semigroups of small coclass in \cite{Dis10, Dis11}; see Sections 
\ref{cc0} and \ref{cc1}. Further, we investigated the graphs $\G_{2,K}$ and 
$\G_{3,K}$ for some finite fields $K$ using computational methods based on 
\cite{Eic07} to solve the isomorphism problem for nilpotent associative 
algebras over finite fields; see Section \ref{cc2}.

Similar to the graphs $\G_{r,K}$ one can also define a directed graph $\G_r$ 
whose vertices correspond one-to-one to the isomorphism types of semigroups of 
coclass $r$. While the graphs $\G_r$ are also of interest, they do not
exhibit the same general features as $\G_{r,K}$. We compare $\G_r$ and
$\G_{r,K}$ briefly in Section \ref{final}.

The idea of using the coclass for the classification of nilpotent algebraic
objects has first been introduced by Leedham-Green \& Newman \cite{LNe80}
for nilpotent groups. We also refer to the book by Leedham-Green \& McKay
\cite{LGM02} for background and many details on the results in the group 
case. Various details of the approach taken here are similar to the 
concepts in group theory. In particular, the idea of searching for periodic 
patterns in coclass graphs as used below also arises in group theory; we 
refer to \cite{DuS01, ELG08} for details. Note though that a nilpotent
semigroup is not a group and hence the coclass theories for groups and
semigroups are independent.

\section{Coclass conjectures for semigroups} 
\label{conj}

In this section we investigate general features of the graph $\G_{r,K}$
for $r \in \N_0$ and arbitrary field $K$.

By construction, every connected component of $\G_{r,K}$ is a rooted tree.
Using basic results on nilpotent semigroups (see~\cite[Lemma 2.1]{Dis11})
one readily shows that $2r$ is an upper bound for the dimension of a root
(that is, the dimension of the corresponding algebra) in $\G_{r,K}$. Thus
$\G_{r,K}$ consists of finitely many rooted trees. We call an infinite path
in a rooted tree {\em maximal} if it starts at the root of the tree.

\begin{conjecture}
\label{conjA}
Let $r \in \N_0$ and $K$ an arbitrary field. 
Then the graph $\G_{r,K}$ has only finitely many maximal infinite paths. 
The number of such paths depends on $r$ but not on $K$.
\end{conjecture}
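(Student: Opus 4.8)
The plan is to reduce the statement to a finiteness result about infinite-dimensional objects, in direct analogy with the coclass theorems for pro-$p$ groups. First I would record that $\G_{r,K}$ is locally finite: there are only finitely many nilpotent semigroups with a given number of non-zero elements, hence only finitely many algebras $KS$ of each fixed dimension, so every vertex of $\G_{r,K}$ has finitely many children. A maximal infinite path $A_0 \ra A_1 \ra A_2 \ra \cdots$ is therefore an inverse system of surjections $A_{i+1} \twoheadrightarrow A_{i+1}/A_{i+1}^{c_{i+1}} \cong A_i$, and I would pass to the inverse limit $\hat A = \varprojlim A_i$. A short computation with $cc(O)=\sum_{i\ge 1}(\dim(O^i/O^{i+1})-1)$ shows that along such a path the dimension and the class each increase by one at every step while $\dim A_{i+1}^{c_{i+1}}=1$; consequently $\hat A$ is a pro-nilpotent $K$-algebra whose power filtration satisfies $\dim(\hat A^i/\hat A^{i+1})=1$ for all large $i$ and has total excess $\sum_{i\ge 1}(\dim(\hat A^i/\hat A^{i+1})-1)=r$. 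Recovering the tail of the path from $\hat A$ through its finite quotients $\hat A/\hat A^{i}$ gives a bijection between maximal infinite paths in $\G_{r,K}$ and isomorphism types of such infinite-dimensional nilpotent $K$-algebras of coclass $r$.

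The heart of the argument is then to prove that there are only finitely many such infinite algebras. Here I would exploit that every finite quotient $\hat A/\hat A^i$ is a \emph{contracted semigroup algebra}, so $\hat A$ carries a multiplicative basis adapted to the power filtration: a $K$-basis on which the product of any two basis vectors is either zero or again a basis vector. In the eventually uniserial range, where the layers are one-dimensional, degree considerations force the product of the layer generators $x_i x_j$ to lie in $\hat A^{i+j}/\hat A^{i+j+1}$, so with respect to this basis it equals $x_{i+j}$ or $0$. Thus the whole multiplication of $\hat A$ is encoded by \emph{combinatorial} data, namely a $\{0,1\}$-valued multiplication table on the basis subject to associativity, together with the bounded finite ``head'' of the algebra, whose dimension is controlled by $r$ exactly as for the roots. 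I would then show that associativity together with the coclass-$r$ constraint leaves only finitely many such combinatorial patterns, giving the finiteness.

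Because this combinatorial datum makes no reference to the field, the same enumeration produces the infinite algebras over every $K$: each pattern is realised over an arbitrary field, and isomorphic patterns remain isomorphic under base change, so the number of infinite algebras, and hence of maximal infinite paths, is the same for all $K$ and depends only on $r$.

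The main obstacle I expect lies in the structure theorem underlying the second paragraph: establishing that an infinite-dimensional nilpotent algebra of coclass $r$ is eventually uniserial and admits a filtration-adapted multiplicative basis, and that the passage from its combinatorial description to genuine algebra isomorphism is faithful. This is precisely where the group-theoretic coclass theorems require their deepest input through the theory of $p$-adic space groups, and proving the semigroup-algebra analogue, while simultaneously keeping the description field-free so that the field-independence falls out automatically, will be the crux of the whole argument.
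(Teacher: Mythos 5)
The statement you are addressing is Conjecture~\ref{conjA} of the paper; the authors offer no proof of it, and it is presented as open. Your first paragraph reproduces, essentially, the reduction the paper itself carries out in Section~\ref{infdim}: the inverse limit of the algebras on a maximal infinite path is an infinite dimensional associative $K$-algebra of coclass $r$ describing that path (Theorem~\ref{infpath}), such an algebra is a contracted semigroup algebra $KS$ for an infinite semigroup $S$ of coclass $r$ (the theorem following it), and the paper states explicitly that Conjecture~\ref{conjA} is equivalent to the finiteness, up to isomorphism, of these infinite objects. So that paragraph, while correct, is not new, and everything after it is where the actual content would have to live.

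The remaining paragraphs do not supply that content. First, in the eventually uniserial range the product $x_i x_j$ of layer generators lies in $\hat A^{i+j}$ but need not generate $\hat A^{i+j}/\hat A^{i+j+1}$: inside the semigroup it can be $0$ or any $x_m$ with $m \geq i+j$, so the multiplication is encoded by a function $(i,j) \mapsto m(i,j)$ (or $0$) rather than a $\{0,1\}$-valued table, and the claim that associativity plus the coclass-$r$ constraint leaves only finitely many such functions, together with finitely many ways of attaching the finite head, is exactly the theorem one would need to prove; you only assert it and yourself flag it as the obstacle. Second, the field-independence argument has a genuine gap: maximal infinite paths are in bijection with isomorphism classes of \emph{algebras}, not of combinatorial patterns, and your observation that isomorphic patterns stay isomorphic over every field gives only one direction. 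Two non-isomorphic patterns (semigroups) can yield algebras that are isomorphic over some fields and not over others --- the paper's own computation that $K X \cong K J_{n-1}$ precisely when $\sqrt{-1} \in K$ is an instance of this in the finite setting. Hence even granting finiteness of patterns, the number of isomorphism classes of infinite algebras, and so of maximal infinite paths, could a priori depend on $K$, and nothing in your argument excludes this.
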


For an algebra $A$ in $\G_{r,K}$ we denote by $\T(A)$ the subgraph of
$\G_{r,K}$ consisting of all paths that start at $A$. This is a rooted
tree with root $A$. We say that $\T(A)$ is a {\em coclass tree} 
if it contains a unique maximal infinite path. A coclass tree
$\T(A)$ is {\em maximal} if either $A$ is a root in $\G_{r,K}$ or
the parent of $A$ lies on more than one maximal infinite paths.

\begin{remark}
Conjecture \ref{conjA} is equivalent to saying that $\G_{r,K}$ consists 
of finitely many maximal coclass trees and finitely many other vertices.
\end{remark}

We consider the maximal coclass trees in $\G_{r,K}$ in more detail. For a
labelled tree $\T$ we denote with $\nlab{\T}$ the tree without labels.

\begin{conjecture}
\label{conjB}
Let $r \in \N_0$ and $K$ an arbitrary field. 
Let $\T$ be a maximal coclass tree in $\G_{r,K}$ with maximal infinite path
$A_1 \ra A_2 \ra \ldots$ Then $\T$ is {\em weakly virtually periodic}; that 
is, there exist positive integers $l$ and $k$ so that $\nlab{\T}(A_l) \cong 
\nlab{\T}(A_{l+k})$ holds.
\end{conjecture}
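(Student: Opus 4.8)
The plan is to mimic, in the setting of nilpotent associative algebras, the strategy behind the periodicity theorems for coclass trees of finite $p$-groups (cf.\ \cite{DuS01, ELG08}), the central device being passage to the inverse limit of the maximal infinite path. The defining edges $A_i \cong A_{i+1}/A_{i+1}^{c_{i+1}}$ exhibit $(A_i)_{i \geq 1}$ as an inverse system, and I would form the pro-nilpotent algebra $A_\infty = \varprojlim A_i$. Since $cl(A_i) = \dim A_i - r$ grows without bound while the coclass stays fixed, $A_\infty$ is infinite-dimensional but \emph{thin}: each layer $A_\infty^i/A_\infty^{i+1}$ has dimension at most $r+1$, so all but finitely many layers are one-dimensional. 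The first step is a structure theorem for $A_\infty$. I expect it to be, after a finite-dimensional initial segment, a finitely generated module $M$ over a polynomial ring $K[t]$, where $t$ is multiplication by a fixed element of $A_\infty/A_\infty^2$ and implements the uniserial downward shift of one step along the spine; concretely $A_n$ should be recovered as the truncation of $A_\infty$ by $t^{\,n+r}$. This is the algebra-theoretic counterpart of the fact that an infinite pro-$p$ group of coclass $r$ is a $p$-adic space group, that is, finite-by-(free $\Z_p$-module).

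Second, I would reduce the shape of the tree to cohomological data attached to $A_\infty$. A child $B$ of $A_n$ is by definition an algebra of coclass $r$ with $B/B^c \cong A_n$; since $B^c = B^{cl(B)}$ satisfies $B \cdot B^c = B^c \cdot B = 0$, it is an annihilator ideal and $B$ is an extension $0 \ra B^c \ra B \ra A_n \ra 0$ with trivial actions. Such extensions are classified by a Hochschild-type multiplier $H^2(A_n, V)$, and the finite branch $\nlab{\T}(A_n)$ hanging off the spine is encoded by a distinguished finite portion of this multiplier — the \emph{nucleus} of $A_n$ — exactly as covers and nuclei control descendant trees in the group case. The key claim to establish here is that, for coefficient modules $V$ of bounded dimension, these branch invariants are obtained uniformly from $M$ by truncation with $t^{\,n}$, so that multiplication by $t$ furnishes comparison maps $H_n \ra H_{n+1}$ between the branch data at consecutive levels.

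Third, periodicity then follows from the finiteness of the relevant cohomology. Because $A_\infty$ is finite-dimensional-by-(finitely generated $K[t]$-module), the cohomology governing a branch sees only boundedly many layers, and on the free part of $M$ lying below those layers the shift $t$ acts invertibly. Hence the comparison maps $H_n \ra H_{n+1}$ are isomorphisms for all sufficiently large $n$, up to a residual action of the shift on the finitely many stable branch configurations. Taking $l$ to be a level beyond which everything has stabilised, and $k$ to be the period with which the shift permutes these finitely many configurations, one obtains $\nlab{\T}(A_l) \cong \nlab{\T}(A_{l+k})$; the smallest instances (coclass $0$, and the thin part of coclass $1$) should give $k = 1$, while a general $k$ is retained to allow the shift to act with a nontrivial cycle.

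Finally, the main obstacle is the structure theorem of the first step. There is as yet no classification of the infinite-dimensional pro-nilpotent associative algebras of fixed coclass $r$ parallel to Leedham-Green's classification of infinite pro-$p$ groups of coclass $r$, and proving that such a limit algebra is finite-by-module over $K[t]$ — equivalently, that the uniserial shift $t$ acts with finitely generated, eventually free cokernels — is the crux; everything downstream is then commutative algebra together with the finiteness of $H^2$. A secondary subtlety is that only the \emph{unlabelled} trees are asserted to agree: the labels count the semigroups $S$ with $KS \cong A$, a quantity one has no reason to expect to be periodic, so the argument must extract the tree isomorphism purely from the module and cohomology data of $A_\infty$ without reference to the semigroup preimages — which is precisely the force of the word ``weakly'' in the statement.
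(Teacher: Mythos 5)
The statement you are addressing is Conjecture~\ref{conjB} of the paper: it is posed as an open conjecture, and the paper contains no proof of it. The authors only verify it for $r=0$ and $r=1$ (via the explicit classifications in Sections~\ref{cc0} and~\ref{cc1}) and report computational evidence for $r=2,3$. So there is no ``paper's own proof'' to match yours against; what you have written is a research programme, and by your own admission its crux is unproven. Concretely, your first step --- that the inverse limit $A_\infty$ of the algebras on the spine is finite-dimensional-by-(finitely generated $K[t]$-module), with $t$ implementing the uniserial shift --- is exactly the kind of structure theorem that in the group case required Leedham-Green's classification of infinite pro-$p$ groups of finite coclass, and no analogue is known here. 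The paper proves such a structure result only in the extremal case of $r+1$ generators (Theorem~\ref{mingen}, where the limit algebra is forced to be $M_{K,r}=I_K\oplus\bigoplus_{i=1}^r L_K$); for smaller generator number even the finiteness of the set of maximal infinite paths is itself only Conjecture~\ref{conjA}. Your steps two and three (cohomological control of branches and stabilisation of the comparison maps $H_n\ra H_{n+1}$) are conditional on step one and are themselves only asserted, not argued.

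There is also a gap specific to this paper's setting that your proposal does not address at all: the vertices of $\G_{r,K}$ are not arbitrary nilpotent algebras but \emph{contracted semigroup algebras} $KS$. A child $B$ of $A_n$ in $\G_{r,K}$ is an annihilator extension of $A_n$ that is again of the form $KT$ for some nilpotent semigroup $T$; classifying children therefore requires not just the Hochschild-type multiplier $H^2(A_n,V)$ but a characterisation of which cohomology classes yield algebras admitting a multiplicative basis coming from a semigroup. Any periodicity argument must show that this extra constraint is itself compatible with the shift by $t$, and nothing in the group-theoretic template supplies this. Your closing remark about the word ``weakly'' is also slightly off: ``weakly'' versus ``strongly'' in the paper distinguishes periodicity of the unlabelled tree (Conjecture~\ref{conjB}) from periodicity of the labels via polynomials $f_B(i)$ (Conjecture~\ref{conjC}); the labels are in fact conjectured to behave periodically in the strong form, so they are not as hopeless as you suggest --- but neither conjecture is proved in the paper.
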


The integers $l$ and $k$ with the property of Conjecture \ref{conjB} are
called {\em weak defect} and {\em weak period} of $\nlab{\T}$. Note 
that they are not unique. Every integer larger than $l$ and every multiple 
of $k$ are weak defects and weak periods as well, respectively.

Consider a maximal coclass tree $\T$ of $\G_{r,K}$ with maximal
infinite path $A_1 \ra A_2 \ra \ldots$ Suppose that for some $l$ and
$k$ there exists a graph isomorphism $\mu : \nlab{\T}(A_l) \ra
\nlab{\T}(A_{l+k})$. Then $\mu$ defines a partition of the vertices of
$\T(A_l)$ into finitely many infinite families: for each vertex $B$
contained in $\T(A_l) \setminus \T(A_{l+k})$ define the infinite
family $(B, \mu(B), \mu^2(B), \ldots)$. Hence Conjecture \ref{conjB}
asserts that the unlabelled tree $\nlab{\T}$ can be constructed from a
finite subgraph, provided that a weak defect and a weak period are
known. This implies that $\T$ has finite width. Conjecture \ref{conjA} 
adds that these features of maximal coclass trees extend to all of 
$\G_{r,K}$.
We next exhibit an extension of Conjecture \ref{conjB} incorporating
labels.

\begin{conjecture}
\label{conjC}
Let $r \in \N_0$ and $K$ an arbitrary field. 
Let $\T$ be a maximal coclass tree in $\G_{r,K}$ with maximal infinite path
$A_1 \ra A_2 \ra \ldots$ Then $\T$ is {\em strongly virtually periodic}; 
that is, there exist positive integers $l$ and $k$, a graph isomorphism 
$\mu : \nlab{\T}(A_l) \ra \nlab{\T}(A_{l+k})$ and for every vertex $B$ 
in $\T(A_l) \setminus \T(A_{l+k})$ a rational polynomial $f_B$ so that 
the label of $\mu^i(B)$ equals $f_B(i)$.
\end{conjecture}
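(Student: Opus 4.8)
The plan is to reduce the statement about labels to a counting problem for multiplicative bases and then to control how this count varies along an infinite family. Recall that a finite nilpotent semigroup $S$ with zero corresponds to the contracted algebra $KS$ together with its distinguished basis consisting of the images of the non-zero elements of $S$; this basis is \emph{multiplicative}, meaning that the product of two basis vectors is again a basis vector or zero. Conversely, every multiplicative basis $\mathcal B$ of a nilpotent algebra $A$ turns $\mathcal B \cup \{0\}$ into a nilpotent semigroup $S$ with $KS \cong A$, and two multiplicative bases yield isomorphic semigroups precisely when they lie in a common orbit of $\operatorname{Aut}(A)$. Hence the first step is to record the identity
\[
 \text{label}(A) \;=\; \#\{\operatorname{Aut}(A)\text{-orbits of multiplicative bases of } A\},
\]
which turns Conjecture \ref{conjC} into the assertion that, along each family $(B,\mu(B),\mu^2(B),\dots)$ produced by the periodicity of Conjecture \ref{conjB}, this orbit number is a polynomial in $i$.

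Second, I would make the family uniform in $i$. Assuming Conjecture \ref{conjB}, fix the isomorphism $\mu\colon \nlab{\T}(A_l) \ra \nlab{\T}(A_{l+k})$ and realise the maximal infinite path as the system of proper quotients $A_i = A_\infty / A_\infty^{\,c_i+1}$ of a single infinite-dimensional nilpotent limit algebra $A_\infty$ attached to the path, the successive kernels being one-dimensional. The periodicity then says that $\mu^i(B)$ is obtained from $B$ by appending $i$ copies of a fixed period block to the nilpotent tail, so that the structure constants of $\mu^i(B)$ are described by a single scheme whose size grows linearly with $i$ and whose shape is eventually independent of $i$.

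Third, I would analyse the two ingredients of the orbit count as functions of $i$. On one side, the set of multiplicative bases of $\mu^i(B)$ is a constructible set cut out by the uniform equations expressing closure of products together with the linear-independence inequations; the discrete combinatorial choices in selecting such a basis layer by layer along the power series should grow in a controlled, eventually stable-in-shape manner and contribute polynomially. On the other side, $\operatorname{Aut}(\mu^i(B))$ is an algebraic group whose unipotent part grows with $i$ in exactly the direction needed to absorb the continuous freedom in the choice of basis, leaving a finite orbit set. Combining these through an orbit-counting argument over the growing tail, modelled on the definable-set and zeta-function techniques used for the analogous periodicity results in the group coclass setting (cf.\ \cite{DuS01, ELG08}), should exhibit $\text{label}(\mu^i(B))$ as the value $f_B(i)$ of a fixed rational polynomial.

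The hard part will be the third step: proving that the orbit count is \emph{exactly} a polynomial in $i$ rather than merely bounded, eventually periodic, or quasi-polynomial. This requires showing that the growth of the automorphism group precisely cancels the continuous parameters in the variety of multiplicative bases at every level, uniformly in $i$, and that the remaining discrete count assembles into a single polynomial. Subtleties about the field $K$ enter here, since $f_B$ is permitted to depend on $K$ through the number of $K$-rational configurations, and controlling these uniformly along the infinite tail is the principal obstacle; as in Conjecture \ref{conjB}, enlarging the defect $l$ and the period $k$ may be necessary to reach the range where the polynomial behaviour sets in.
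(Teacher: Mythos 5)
The statement you are attempting is Conjecture~\ref{conjC} of the paper, and the paper offers no proof of it: it is posed as an open conjecture, supported only by the explicit classifications for coclass $0$ and $1$ (Sections~\ref{cc0} and~\ref{cc1}, where strong virtual periodicity is verified directly from the complete list of semigroups of coclass at most $1$, the isomorphisms among their contracted semigroup algebras, and the resulting polynomials of degree at most $1$) and by computational experiments for coclass $2$ and $3$. So there is nothing general in the paper to compare your argument against; the closest analogue is the low-coclass verification, which proceeds by a completely different route, namely case-by-case classification rather than a structural argument.

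Judged on its own terms, your proposal has a genuine gap at its core. The first two steps are sound and consistent with the paper's framework: the identification of the label of $A$ with the number of $\operatorname{Aut}(A)$-orbits of multiplicative bases of $A$ is correct (a semigroup isomorphism between the semigroups attached to two multiplicative bases extends linearly to an algebra automorphism, and conversely), and realising the infinite path through an inverse limit algebra is exactly the construction of Section~\ref{infdim}. But the third step is not an argument; it is a restatement of the conjecture. The assertions that the discrete choices ``should grow in a controlled, eventually stable-in-shape manner and contribute polynomially'' and that the automorphism group grows ``in exactly the direction needed to absorb the continuous freedom'' are precisely what must be proved, and you supply no mechanism for either: no reason why the orbit count is an exact polynomial in $i$ rather than a quasi-polynomial or an eventually periodic function, and no control of how the count depends on the field $K$ (the paper's own data show the answer genuinely varies with $K$, e.g.\ with whether $\sqrt{-1} \in K$). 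Your strategy also presupposes Conjecture~\ref{conjB}, which is itself unproven. You correctly flag this third step as the hard part, but flagging it does not fill it, so what you have is a plausible research programme rather than a proof.
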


The integers $l$ and $k$ with the property of Conjecture \ref{conjC} are
called {\em strong defect} and {\em strong period} of $\T$. As in the
weak case, they are not unique. Further, every strong defect and strong
period are also a weak defect and weak period, but the converse does not
hold in general; compare Section \ref{cc1} for an example.

Conjectures \ref{conjA} and \ref{conjC} suggest the following new approach 
towards a classification up to isomorphism of all nilpotent semigroups of 
fixed coclass $r \in \N_0$. 

\begin{items}
\item[(1)]
Choose an arbitrary field $K$ and classify the maximal infinite paths in 
$\G_{r,K}$.
\item[(2)]
For each maximal infinite path consider its corresponding coclass tree
$\T$ and find a strong defect $l$, a strong period $k$ and an upper bound 
$d$ to the degree of the polynomials of the associated families. 
\item[(3)]
For each maximal coclass tree $\T$ with strong defect $l$, strong period $k$ 
and bound $d$:
\begin{items}
\item[(a)]
Determine the unlabelled tree $\nlab{\T}$ up to depth $l+(d+1)k$. 
\item[(b)]
For each vertex $B$ in the determined part of $\nlab{\T}$ compute its
label.
\end{items}
\item[(4)]
Determine the finite parts of $\G_{r,K}$ outside the maximal coclass trees.
\end{items}

Step (1) is discussed further in Section \ref{infdim} below. For Step (2)
it would be the hope that a constructive proof of the conjectures posed
here might also yield values for strong defect, strong period and bounds 
for the degrees of the arising polynomials.

Steps (3a) and (3b) may be facilitated by two algorithms. The first 
determines up to isomorphism all contracted semigroup algebras $B$ of 
class $c+1$ with $B/B^{c+1} \cong A$ for any given contracted semigroup 
algebra $A$ of class $c$. The second algorithm takes a nilpotent 
associative algebra $A$ of finite dimension and computes up to isomorphism 
all semigroups $S$ with $K S \cong A$. Both algorithms reduce to a finite
computation if the underlying field $K$ is finite. A practical realisation
for the first algorithm in the finite field case may be obtained as 
variation of the method in \cite{Eic07}.

Once the Steps (1) - (4) have been performed, this would allow to construct 
the full graph $\G_{r,K}$ using the graph isomorphism of Conjecture
\ref{conjC}. The polynomials $f_B$ can be interpolated from the given 
information, as there are $d+1$ values $f_B(i)$ available.

\section{The infinite paths in $\G_{r,K}$}
\label{infdim}

In this section we investigate in more detail the infinite paths in 
$\G_{r,K}$ for arbitrary $r \in \N_0$ and arbitrary field $K$. We first 
provide some background for our constructions.

\subsection{Coclass for infinite objects}

Let $O$ be a finitely generated infinite semigroup or a finitely generated 
infinite dimensional associative algebra. Then every quotient $O/O^i$ is 
finitely generated of class at most $i$ and hence is finite (in the semigroup
case) or finite dimensional (in the algebra case). Thus $O/O^i$ has finite
coclass $cc(O/O^i)$. We say that $O$ is {\em residually nilpotent} if 
$\cap_{i \in \N} O^i = 0$ holds. If $O$ is finitely generated and 
residually nilpotent, then we define its {\em coclass} $cc(O)$ as
\[ cc(O) = \lim_{i \ra \infty} cc(O/O^i).\]

The coclass of $O$ can be finite or infinite. It is finite if and only if
there exists $i \in \N$ so that $|O^{j+1} \setminus O^j| = 1$ (in the
semigroup case) or $\dim(O^j/O^{j+1}) = 1$ (in the algebra case) for all 
$j \geq i$. If we say that $O$ has `finite coclass', then this implies
that $O$ is finitely generated and residually nilpotent.

\subsection{Inverse limits of algebras and semigroups}

Consider a maximal infinite path $A_1 \ra A_2 \ra \ldots$ in $\G_{r,K}$ and
let $\hat{A} = \prod_{i \in \N} A_i$ be the Cartesian product of the algebras
on the path. If $A_1$ has class $c$, then $A_j$ has class $j+c-1$ and thus
$A_{j+1}/A_{j+1}^{j+c} \cong A_j$ for every $j \in \N$. For every $j \in \N$ 
we choose an epimorphism $\nu_j : A_{j+1} \ra A_j$ with kernel 
$A_{j+1}^{j+c}$. We define the {\em inverse limit} of the algebras on the path as
\[ A = \left\{ (a_1, a_2, \ldots) \in \hat{A} \mid \nu_j(a_{j+1}) = a_j 
     \mbox{ for every } j \in \N \right\}.\]

The inverse limit $A$ is an infinite dimensional associative $K$-algebra
which satisfies $A / A^{j+c} \cong A_j$ for every $j \in \N$. Thus $A/A^2$
is finite dimensional and hence $A$ is finitely generated. It is also 
residually finite and has coclass $r$. Further, each algebra on the maximal 
infinite path can be obtained as quotient of $A$ and thus $A$ fully 
describes the considered maximal infinite path. We summarize this as follows.

\begin{theorem}
\label{infpath}
Let $r \in \N_0$ and $K$ an arbitrary field.
For every maximal infinite path in $\G_{r,K}$ there exists an infinite
dimensional associative $K$-algebra of coclass $r$ which describes 
the path. 
\end{theorem}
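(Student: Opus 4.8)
The plan is to take the inverse limit $A$ constructed immediately before the statement and verify that it has all the asserted properties; the genuine content lies in the isomorphisms $A/A^{j+c} \cong A_j$, from which everything else follows quickly. First I would check that $A$ really is an associative $K$-algebra: it is the set of compatible sequences $(a_1, a_2, \ldots) \in \hat{A}$, and since each connecting map $\nu_j$ is an algebra homomorphism, the compatibility conditions $\nu_j(a_{j+1}) = a_j$ are preserved under the componentwise operations. Hence $A$ is a subalgebra of the product $\hat{A}$, and this step is routine.

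The core of the argument is to analyse the coordinate projections $\pi_j : A \ra A_j$, $(a_1, a_2, \ldots) \ms a_j$, and to show that each $\pi_j$ is surjective with kernel exactly $A^{j+c}$. Surjectivity follows because the inverse system consists of epimorphisms: given $x \in A_j$ one lifts successively through the $\nu_m$ to build a compatible sequence projecting onto $x$. For the kernel, the inclusion $A^{j+c} \subseteq \ker \pi_j$ is immediate, since $A_j$ has class $j+c-1$ and therefore $\pi_j(A^{j+c}) = A_j^{j+c} = 0$.

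The hard part, and the step I expect to be the main obstacle, is the reverse inclusion $\ker \pi_j \subseteq A^{j+c}$: this is the familiar inverse-limit subtlety that an abstract algebra power $A^{j+c}$ can be strictly smaller than the coordinatewise condition that $a_m \in A_m^{j+c}$ for all $m$. The key auxiliary fact is $\pi_m(A^i) = A_m^i$ for all $m$ and $i$ (the inclusion $\subseteq$ is clear, and $\supseteq$ follows by lifting the factors of a product in $A_m^i$ along the surjection $\pi_m$). Given $a \in \ker \pi_j$, compatibility forces $a_1 = \cdots = a_j = 0$ and $a_m \in A_m^{j+c}$ for all $m$, and I would then realise $a$ as a genuine element of $A^{j+c}$ by a successive-approximation argument. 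Here the coclass structure is essential: since each $A_m$ has coclass $r$, the connecting maps $\nu_m$ have one-dimensional kernels equal to the top powers, so every element of a high power can be written as a product that absorbs one factor and the correction terms land in ever higher powers of $A$. Verifying that this assembles $a$ as an honest (finite sum of) product(s) inside $A^{j+c}$, rather than only as a limit, is the delicate point.

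Granting the isomorphisms $A/A^{j+c} \cong A_j$, the remaining claims are short. Residual nilpotency holds because $\cap_i A^i \subseteq \cap_j \ker \pi_j = 0$, the latter since $A$ embeds into $\hat{A}$. Finite generation follows from $A/A^2 \cong A_1/A_1^2$ being finite dimensional, in the sense of Section \ref{infdim}. For the coclass, comparing dimensions gives $\dim A_j = r + (j+c-1)$, whence $\dim(A^m/A^{m+1}) = 1$ for all $m \geq c+1$; by the criterion of Section \ref{infdim} this means $A$ has finite coclass, and a direct count of $cc(A/A^i)$ for large $i$ returns the value $r$. Finally, the statement that $A$ \emph{describes the path} is exactly that each vertex $A_j$ is recovered as the quotient $A/A^{j+c}$ and that the induced quotient maps $A/A^{(j+1)+c} \ra A/A^{j+c}$ reproduce the connecting epimorphisms $\nu_j : A_{j+1} \ra A_j$ underlying the edges of the path, which is built into the construction.
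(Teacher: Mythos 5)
Your proposal takes exactly the route the paper does: Theorem~\ref{infpath} is stated as a summary of the inverse-limit construction in Section~\ref{infdim}, and the paper's entire ``proof'' is the paragraph preceding the statement, which simply asserts that $A/A^{j+c}\cong A_j$, that $A$ is finitely generated, residually nilpotent, and of coclass $r$. Your verification of these claims is therefore the same argument, only carried out in more detail than the paper itself provides. In particular, the step you single out as delicate --- that the algebraically defined power $A^{j+c}$ exhausts $\ker\pi_j=\varprojlim A_m^{j+c}$, rather than being a proper dense subspace of it --- is a genuine subtlety that the paper passes over in silence; your idea of using the one-dimensional layers $A^i/A^{i+1}$ (for large $i$, forced by finite coclass) to absorb the infinite tail of an element into a single product is the right way to close it, and is the same mechanism that appears in the proof of Theorem~\ref{mingen}, where $x^iA^{i+1}$ is shown to generate $A^i/A^{i+1}$. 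So: same approach, with your writeup being the more careful of the two; to make it fully rigorous you would still need to write out that absorption argument, but nothing in it should fail.
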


Isomorphic algebras of the type considered in Theorem \ref{infpath} 
describe the same infinite path. Hence an approach to the classification 
of the maximal infinite paths in $\G_{r,K}$ is the determination up to 
isomorphism of the infinite dimensional associative $K$-algebras $A$ of 
coclass $r$ whose quotients $A/A^j$ are contracted semigroup algebras
for every $j \in \N$. Conjecture \ref{conjA} is equivalent to saying that
there are only finitely many of these objects up to isomorphism. The
following theorem describes these algebras in more detail.

\begin{theorem}
Let $r \in \N_0$ and $K$ an arbitrary field.
Each infinite dimensional associative $K$-algebra of coclass $r$ which
describes an infinite path in $\G_{r,K}$ is isomorphic to a contracted 
semigroup algebra $KS$ for an infinite semigroup $S$ of coclass $r$.
\end{theorem}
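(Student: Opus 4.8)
The plan is to realise $A$ as a contracted semigroup algebra $KS$, where $S$ is built as an inverse limit of the finite nilpotent semigroups underlying the algebras on the path. The starting point is that for a finite nilpotent semigroup $T$ the power ideals $T^i$ are characteristic and the contracted algebra of the Rees quotient satisfies $K(T/T^i) \cong KT/(KT)^i$, since $(KT)^i = K(T^i)$. Applied along the path, if $A_{j+1} \cong KT$ for a semigroup $T$ of class $j+c$, then $A_j \cong A_{j+1}/A_{j+1}^{j+c} \cong K(T/T^{j+c})$, so the Rees quotient $T \ms T/T^{j+c}$ induces, on contracted algebras, precisely a connecting epimorphism of the type $\nu_j$.

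First I would make these choices coherent. For each $j$ let $\Sigma_j$ be the set of isomorphism types of finite nilpotent semigroups $T$ with $KT \cong A_j$; this set is finite (it is the vertex label of $A_j$) and non-empty (because $A_j$ is a vertex of $\G_{r,K}$), and every $T \in \Sigma_j$ has coclass $r$. The Rees quotient by the top power ideal gives a map $\Sigma_{j+1} \ra \Sigma_j$, so the $\Sigma_j$ form an inverse system of non-empty finite sets, whose inverse limit is therefore non-empty. A thread in this limit yields semigroups $S_j$ with $KS_j \cong A_j$ together with semigroup epimorphisms $\pi_j : S_{j+1} \ra S_j$ realising the Rees quotient by $S_{j+1}^{j+c}$. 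Fixing isomorphisms $f_j : KS_j \ra A_j$ and replacing each $\nu_j$ by $f_j \circ K\pi_j \circ f_{j+1}^{-1}$ — a legitimate replacement, since it is an epimorphism with kernel $A_{j+1}^{j+c}$ — I may assume $\nu_j = K\pi_j$ after identifying $A_j$ with $KS_j$, so that $A = \varprojlim(KS_j, K\pi_j)$.

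Next I would put $S = \varprojlim S_j$ with projections $\rho_j : S \ra S_j$. Since each $\pi_j$ is injective off its kernel ideal, $\rho_j$ is surjective and restricts to a bijection of $S \setminus \rho_j^{-1}(0)$ onto $S_j \setminus \{0\}$, so $S/\rho_j^{-1}(0) \cong S_j$; tracing the power ideals through the $\pi_j$ identifies $\rho_j^{-1}(0)$ with $S^{j+c}$, giving $S/S^{j+c} \cong S_j$. From $S_j^{j+c} = \{0\}$ one obtains $\cap_k S^k = \{0\}$, so $S$ is residually nilpotent; since $S/S^2 \cong S_1/S_1^2$ is finite and every non-zero element lies in some $S^m \setminus S^{m+1}$ and hence factors into indecomposable elements of $S \setminus S^2$, the semigroup $S$ is finitely generated. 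Finally $cc(S) = \lim_j cc(S/S^{j+c}) = \lim_j cc(S_j) = r$.

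It remains to prove $KS \cong A$, which I expect to be the main obstacle. The candidate is the linear map $\Phi : KS \ra A$ sending a non-zero $s = (s_j)_j \in S$ to the thread $(s_1, s_2, \ldots) \in \varprojlim KS_j = A$, each $s_j$ read as a basis vector of $KS_j$ or as $0$. Componentwise multiplication makes $\Phi$ an algebra homomorphism, and residual nilpotency makes it injective: any finite set of distinct non-zero elements of $S$ becomes, in a sufficiently late coordinate, a set of distinct non-zero basis vectors, so no non-trivial relation over $K$ survives. The delicate point is surjectivity, and it is exactly here that the finite generation of $A$ must be used rather than the bare inverse limit: the images under $\Phi$ of a generating set of $S$ generate the finitely generated algebra $A$, forcing $A \subseteq \Phi(KS) \subseteq A$. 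Pinning $A$ down as the finitely generated algebra of coclass $r$ (and not its completion), and checking that $\Phi$ respects the power filtrations so that the coclass is preserved, is the technical heart of the argument.
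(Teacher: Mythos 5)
Your proposal follows essentially the same route as the paper's own proof: a compactness argument (your non-empty inverse limit of finite sets of isomorphism types is the paper's K\"onig's Lemma applied to a locally finite tree of semigroups) produces a coherent sequence of semigroups $S_j$ with $KS_j \cong A/A^{j+c}$, the semigroup $S$ is taken as their inverse limit, and $KS \cong A$ is deduced from the compatibility of the two towers. The one step you flag as unresolved --- surjectivity of $\Phi$, i.e.\ that $A$ is the finitely generated algebra rather than a completion --- is precisely the step the paper itself dispatches with only a commutative diagram, so your treatment is, if anything, the more candid of the two.
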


\begin{proof}
Let $A$ be an infinite dimensional associative $K$-algebra of coclass $r$
which describes an infinite path in $\G_{r,K}$. Then there exists an $i \in 
\N$ so that $A/A^j$ is a contracted semigroup algebra of coclass $r$ for 
every $j \geq i$. Each of the quotients $A/A^j$ may be the contracted 
semigroup algebra for several non-isomorphic semigroups. Our aim is to show 
that for every $j \geq i$ there exists a semigroup $S_j$ so that $A/A^j 
\cong K S_j$ and $S_j \cong S/S^j$ for an infinite semigroup $S$ of 
coclass $r$.

We define a graph $\L$ whose vertices correspond one-to-one to the isomorphism
types of semigroups whose contracted semigroup algebra is isomorphic to a
quotient $A/A^j$ for some $j \geq i$. We connect two semigroups in $\L$
by a directed edge $U \ra T$ if $T/T^c \cong U$, where $c$ is the class of
$T$. If a semigroup $T$ satisfies $KT \cong A/A^j$ for some $j > i$, then 
$T$ has class $j-1$ and $U \cong T/T^{j-1}$ satisfies $KU \cong A/A^{j-1}$.
Hence each connected component of $\L$ is a tree with a root of class $i-1$
and coclass $r$. There is at least one infinite connected component $\M$ of
$\L$. By K\"onig's Lemma, the tree $\M$ contains an infinite path, say
$M_i \ra M_{i+1} \ra \ldots$ Let $S$ be the inverse limit of the semigroups
on this infinite path. Then $S$ is an infinite semigroup with $S/S^j
\cong M_j$ and $KM_j \cong A/A^j$ for every $j \geq i$. In particular, the
semigroup $S$ has finite coclass $r$. 

It remains to show that $S$ satisfies $KS \cong A$. This follows from the
construction of $S$, as the following diagram is commutative, where upwards
arrows denote embeddings of semigroups in their contracted semigroup 
algebras:
\[ \begin{array}{ccccccc}
   A/A^i & \ra & A/A^{i+1} & \ra & A/A^{i+2} & \ra & \ldots \\
   \uparrow & & \uparrow & & \uparrow && \\
   S/S^i & \ra & S/S^{i+1} & \ra & S/S^{i+2} & \ra & \ldots \\
   \end{array}. \]
This completes the proof.
\end{proof}

\subsection{Examples}

Consider the polynomial algebra in one indeterminate and let $I_K$ denote
its ideal consisting of all polynomials with zero constant term. Then $I_K$
is an explicit construction for the free non-unital associative algebra
on one generator over the field $K$. It is isomorphic to the contracted
semigroup algebra $KS$ with $S \cong (\N_0, +)$ and it has coclass $0$. 
Hence it describes an infinite path in $\G_{0,K}$. In Section \ref{cc0} 
we observe that it describes the unique maximal infinite path in $\G_{0,K}$.

Examples of infinite dimensional contracted semigroup algebras of higher 
coclass can be obtained inductively using the following process. Let $S$
be an infinite semigroup of coclass $r-1$. An {\em annihilator extension} 
of $S$ is an infinite semigroup $T$ so that $T$ contains a non-zero 
element $t \in \Ann(T)$ with $S \cong T / \langle t \rangle$. 

\begin{lemma}
Let $r \in \N$ and let $S$ be an infinite semigroup of coclass $r-1$. 
Each annihilator extension $T$ of $S$ is an infinite semigroup of coclass 
$r$.
\end{lemma}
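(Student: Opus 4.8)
The plan is to transfer everything through the Rees quotient map $\pi : T \ra S = T/\langle t\rangle$. Since $t \in \Ann(T)$ we have $t \cdot t = 0$, so $\langle t\rangle = \{t,0\}$ is an ideal contained in the annihilator and $\pi$ is simply the homomorphism collapsing $t$ to zero. As $\pi$ is surjective, $\pi(T^j) = S^j$ for all $j$; and since the only elements mapping to $0$ are $t$ and $0$, each power satisfies $\pi^{-1}(S^j) = T^j \cup \{t\}$. I would first record that $T$ is finitely generated: lifting a finite generating set of $S$ (which is finitely generated, having finite coclass) and adjoining $t$ yields a generating set of $T$, because every element outside $\langle t\rangle$ has a unique preimage lying in the subsemigroup generated by the lifts.

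Granting this, the whole statement reduces to showing that $T$ is residually nilpotent, i.e. $t \notin \bigcap_j T^j$. The inclusion $\pi(\bigcap_j T^j) \subseteq \bigcap_j S^j = 0$ already forces $\bigcap_j T^j \subseteq \{t,0\}$, so the only issue is whether $t$ lies in every power. Once $c := \max\{j : t \in T^j\}$ is known to be finite, the coclass follows by comparing finite quotients: for $i > c$ we have $t \notin T^i$, so $T^i \cup \{t\}$ properly contains the ideal $T^i$ by the single element $t$, and $S/S^i \cong T/(T^i \cup \{t\})$ has exactly one non-zero element fewer than $T/T^i$ while having the same class $i-1$. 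Hence $cc(T/T^i) = cc(S/S^i) + 1$, and letting $i \ra \infty$ gives $cc(T) = cc(S) + 1 = r$; as $T$ is infinite by hypothesis, this is the assertion.

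The main obstacle is therefore the residual nilpotency, and this is where I expect the real work to lie. I would argue by contradiction, assuming $t \in T^j$ for all $j$. Choose $i_0$ beyond which $S$ is thin, so that for $j \geq i_0$ the non-zero part of $T^j$ other than $t$ is a single descending chain $w_j, w_{j+1}, \ldots$ with $w_k \in T^k \setminus T^{k+1}$. Two structural facts then drive the contradiction. First, using finite generation, for large $n$ every membership $t \in T^n$ can be written $t = w_k g$ with $k \geq n-1$ and $g$ a generator (peel off the last generator, noting $t \cdot g = 0$ excludes $t$ as left factor); as there are finitely many generators, one fixed $g_*$ satisfies $w_k g_* = t$ for infinitely many $k$, say $k \in K$. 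Second, thinness forces $w_{l+1} = h_l w_l$ with $h_l \in T \setminus T^2$, so for $k < m$ in $K$ the element $w_m$ is a left multiple $w_m = P w_k$ with $P \in T^{m-k}$. Combining these, $t = w_m g_* = P(w_k g_*) = P t = 0$ since $t \in \Ann(T)$, contradicting $t \neq 0$.

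Thus $c$ is finite and the coclass computation goes through. The delicate point, and the place I would be most careful, is the derivation of the two factorisations $t = w_k g$ and $w_{l+1} = h_l w_l$: both rest on the fact that in the thin region a power $T^j$ has a unique non-zero, non-$t$ layer, so peeling a generator off a product cannot drop more than one level. This is exactly where the finite coclass of $S$ and the finite generation of $T$ are used in tandem.
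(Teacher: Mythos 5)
Your proof is correct and its endgame coincides with the paper's: both compare the finite quotients via $\nu(T^i)=S^i$, obtain $|T/T^i|=|S/S^i|+1$ and hence $cc(T/T^i)=cc(S/S^i)+1$ for $i$ beyond the last power containing $t$, and pass to the limit. Where you genuinely diverge is that you actually prove the existence of that last power. The paper simply writes ``define $c\in\N$ via $t\in T^c\setminus T^{c+1}$'' and then asserts that $T$ is residually nilpotent; it never argues that $t\notin\bigcap_j T^j$, which is exactly the point you isolate as the crux. Your argument for it is sound: every element of $T^j$ outside $\{t,0\}$ corresponds to a non-zero element of $S^j$ (a product of $j$ factors equal to $t$ or $0$ anywhere before the last step collapses to $0$, not $t$), so in the thin range it equals some $w_l$ with $l\ge j$; this validates both of your factorisations ($t=w_kg$ with $g$ a generator and $k\ge n-1$, and $w_{l+1}=h_lw_l$, the latter because $l'>l$ would force $w_{l+1}\in T^{l+2}$), and the pigeonhole on the finite generating set followed by $t=w_mg_*=P(w_kg_*)=Pt=0$ yields the contradiction. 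What your longer route buys is a proof valid from the bare definition of an annihilator extension (an arbitrary infinite semigroup $T$ with a non-zero $t\in\Ann(T)$ and $S\cong T/\langle t\rangle$), rather than one that tacitly presupposes $T$ residually nilpotent; the paper's three-line version is shorter only because it leaves this step implicit. The remaining ingredients --- finite generation by lifting generators of $S$ and adjoining $t$, and the identification $S/S^i\cong T/(T^i\cup\{t\})$ --- match the paper's intent and are correctly justified.
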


\begin{proof}
Consider the sequence $T \geq T^2 \geq T^3 \geq \ldots$ and define $c \in \N$
via $t \in T^c \setminus T^{c+1}$. Let $\nu : T \ra S$ be an epimorphism with 
kernel $\langle t \rangle$. As $\nu(T^i) = S^i$, we obtain that $T/T^i \cong 
S/S^i$ for $1 \leq i \leq c$ and for $i \geq c+1$ we obtain that $|T/T^i| = 
|S/S^i|+1$. Thus $T$ is finitely generated and residually nilpotent and it 
satisfies $cc(T/T^i) = cc(S/S^i)+1$ for $i \geq c+1$. Thus $T$ is an infinite
semigroup of coclass $cc(T) = cc(S)+1$.
\end{proof}

If $A$ is an infinite dimensional contracted semigroup algebra of coclass 
$r-1$, then $A = KS$ for an infinite semigroup $S$ of coclass $r-1$. Thus 
every annihilator extension $T$ of $S$ gives rise to an infinite dimensional
contracted semigroup algebra of coclass $r$. 

We exhibit an explicit example for this process. For this purpose let $L_K$ 
denote the 1-dimensional nilpotent algebra of class $1$. Then $L_K$ is 
isomorphic to the contracted semigroup algebra $KZ_2$, where $Z_n$ is the
zero semigroup with $n$ elements. For every $r\in\N$ the algebra 
\begin{equation}
\label{eq_MKr}
M_{K,r} = I_K \oplus \bigoplus_{i=1}^r L_K
\end{equation}
is an infinite dimensional contracted semigroup algebra of coclass
$r$. As underlying semigroup one can choose the zero union of $(\N,+)$
and $Z_{r+1}$, that is the semigroup on $\N \cup Z_{r+1}$ in which
mixed products equal $0\in Z_{r+1}$. For $r>0$ this is an annihilator
extension of the zero union of  $(\N,+)$ and $Z_{r}$ corresponding to
$M_{K,r-1}$.

We close this section by posing the following question.

\begin{question}
\label{conjE}
Does every infinite dimensional algebra which describes an infinite
path in $\G_{r,K}$ arise as contracted semigroup algebra for a semigroup
which is an annihilator extension?
\end{question}

\section{The minimal generator number}
\label{sec_mingen}

A nilpotent semigroup $S$ has a unique minimal generating set $S \setminus
S^2$. Its cardinality corresponds to the dimension of the quotient $KS / 
(KS)^2$ and thus to the minimal generator number of the algebra $KS$. Hence
$KS \cong KT$ implies that the nilpotent semigroups $S$ and $T$ have the
same minimal generator number. Further, if two algebras in $\G_{r,K}$ are 
connected, then they have the same minimal generator number. This allows 
to define the subgraph $\G_{r,K,d}$ of $\G_{r,K}$ corresponding to the 
nilpotent semigroups of coclass $r$ with minimal generator number $d$.

A nilpotent semigroup of coclass $r$ has at most $r+1$ generators. 
Thus $\G_{r,K,d}$ is empty for $d \geq r+2$ (and also for $d=1$ if $r > 0$). 
The extremal case $\G_{r,K,r+1}$ can be described in more detail as the 
following theorem shows. Recall that $Z_n$ is the zero semigroup with $n$ 
elements and $M_{K,r}$ is defined in~\eqref{eq_MKr}.

\begin{theorem}
\label{mingen}
Let $r \in \N_0$ and $K$ an arbitrary field. Then 
$\G_{r,K,r+1}$ consists of a unique maximal coclass tree with corresponding 
infinite dimensional algebra $M_{K,r}$. The root of the maximal coclass tree
is $K Z_{r+2}$ if $r > 0$ and $K Z_{1}$ if $r = 0$.
\end{theorem}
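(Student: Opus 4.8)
The plan is to prove Theorem~\ref{mingen} by characterizing
nilpotent semigroups of coclass $r$ that attain the maximal
generator number $r+1$, and then showing that the corresponding
contracted semigroup algebras all lie in a single coclass tree with
the claimed structure. First I would analyse the semigroup side.
Let $S$ be a nilpotent semigroup of coclass $r$ with exactly $r+1$
generators, so $\dim(KS/(KS)^2) = |S \setminus S^2| = r+1$.
Writing $n$ for the number of non-zero elements and $c = cl(S)$, the
coclass condition gives $cc(S) = n - c = r$, and the power series
$S > S^2 > \cdots > S^{c+1} = \{0\}$ has $c$ proper steps whose
cardinality drops sum to $n$. Since the first step already accounts
for $r+1$ elements (the generators in $S \setminus S^2$), and there
are $c-1$ remaining steps each contributing at least one element,
counting forces $n \geq (r+1) + (c-1) = c + r$, with equality exactly
when every subsequent step drops by a single element. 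As $n = c + r$
by the coclass hypothesis, I expect equality throughout, so
$|S^{i} \setminus S^{i+1}| = 1$ for all $2 \le i \le c$. This rigid
layer structure is the backbone of the argument.

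Next I would extract the algebraic consequence. The dimension
pattern $\dim((KS)^i/(KS)^{i+1}) = 1$ for $i \ge 2$, together with
$\dim((KS)^1/(KS)^2) = r+1$, pins down the shape of $KS$: above the
first layer the algebra is one-dimensional in each degree, matching
exactly the structure of $M_{K,r} = I_K \oplus \bigoplus_{i=1}^r L_K$,
whose generating layer has dimension $r+1$ and all higher layers have
dimension $1$ (the single $I_K$ summand contributes the graded line in
high degrees while the $r$ copies of $L_K$ only add to the first
layer). I would then argue that each finite quotient $A/A^j$ of
$M_{K,r}$ is the contracted semigroup algebra $KS$ of exactly the
semigroups described above, so that the vertices of $\G_{r,K,r+1}$ are
precisely the algebras $M_{K,r}/M_{K,r}^{\,j}$. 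Because every such
quotient has a $1$-dimensional top layer for $j$ large, the infinite
dimensional algebra describing the path is $M_{K,r}$ itself, which is
residually nilpotent of coclass $r$ by the inductive annihilator
extension construction already recorded after~\eqref{eq_MKr}; by
Theorem~\ref{infpath} it describes the single maximal infinite path,
and uniqueness of the generator-number-$r+1$ layer forces the tree to
contain a unique maximal infinite path, hence to be a single maximal
coclass tree.

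To identify the root I would locate the algebra of smallest dimension
in $\G_{r,K,r+1}$. For $r > 0$ the minimal class is $2$: with $r+1$
generators and the forced single-element higher layers, the smallest
semigroup is $Z_{r+2}$ extended by one product, giving non-zero count
$n = c + r = 2 + r$ and yielding $KZ_{r+2}$; for $r = 0$ the minimal
object is the one-dimensional algebra $KZ_1$ (the single generator with
square zero, class $1$). I would verify that no smaller-dimensional
algebra with generator number $r+1$ and coclass $r$ exists, which
follows from the same counting bound $n \ge c + r$ applied at the
minimal admissible class.

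The hard part will be the passage from the numerical layer data to a
genuine isomorphism $KS \cong M_{K,r}/M_{K,r}^{\,j}$, rather than mere
agreement of the graded dimensions: one must show that the rigid
dimension sequence, combined with associativity and the fact that $KS$
is a contracted semigroup algebra (so it has a distinguished monomial
basis coming from $S$), leaves no room for a non-isomorphic algebra
with the same invariants. I would handle this by tracking how the
single high-degree layer must be generated by a fixed product of
generators and showing the remaining $r$ generators are forced to be
annihilators, thereby reconstructing the $I_K \oplus \bigoplus L_K$
decomposition on the nose. This rigidity step is where the real work
lies; the counting and the root identification are comparatively
routine once it is in place.
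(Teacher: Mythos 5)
Your overall architecture matches the paper's --- identify the root $KZ_{r+2}$, use a counting argument to show $|S^i\setminus S^{i+1}|=1$ for $i\geq 2$, and prove a rigidity statement identifying the unique infinite path with $M_{K,r}$ --- but there are two genuine gaps. First, your claim that ``the vertices of $\G_{r,K,r+1}$ are precisely the algebras $M_{K,r}/M_{K,r}^{\,j}$'' is false: a maximal coclass tree contains many vertices off its maximal infinite path. Already for $r=1$ the graph $\G_{1,K,2}=\G_{1,K}$ has several non-isomorphic algebras in each dimension ($KH_{n-1}$, $KJ_{n-1}$, $KN_1$, $KN_3$, and possibly $KX$), only one of which is a quotient of $M_{K,1}$. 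So you cannot get connectivity of $\G_{r,K,r+1}$ for free from that claim. What is actually needed is an induction on $|S|$: your own counting gives $|S^c|=2$, hence $S/S^c$ again has coclass $r$ and minimal generator number $r+1$ with one element fewer, which produces an edge $K(S/S^c)\ra KS$ and hence a path back to the root. You have this ingredient but never assemble it into a connectivity argument.

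Second, the rigidity step that you correctly flag as ``where the real work lies'' is left as a one-sentence intention, and it is precisely the nontrivial core of the proof; graded dimension data alone does not determine the algebra. The paper's argument runs as follows: for an infinite dimensional algebra $A$ of coclass $r$ with $\dim(A/A^2)=r+1$ and $\dim(A^i/A^{i+1})=1$ for $i\geq 2$, choose $v,w,x$ with $vwx$ generating $A^3$ modulo $A^4$; then $vw$ and $wx$ both generate $A^2/A^3$, so $vw=kwx$ up to $A^3$, whence $x^2$ generates $A^2/A^3$ and, inductively, $x^i$ generates $A^i/A^{i+1}$. One then corrects the remaining generators to $y_i=x_i-\sum_j k_{ij}x^{j-1}$ so that $xy_i=0$, and deduces $y_iy_j=0$ from $xy_iy_j=0$ because the powers $x^j$, $j\geq 2$, form a basis of $A^2$. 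Your sketch does not explain why a single generator $x$ whose powers span all higher layers must exist, nor why the remaining generators can be normalised to annihilate everything; without this the identification $A\cong M_{K,r}$ is unproved. (A minor further slip: the root $Z_{r+2}$ has class $1$, not $2$; it is the zero semigroup itself, with $r+1$ non-zero elements, giving the $(r+1)$-dimensional algebra $KZ_{r+2}$.)
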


\begin{proof}
The semigroup $Z_{r+2}$ has $r+2$ elements, minimal generator number $r+1$, 
class $1$ and thus coclass $r$. If $r > 0$, then $Z_{r+2}$ is the unique 
semigroup of coclass $r$ and order at most $r+2$ and hence $KZ_{r+2}$ is a 
root in $\G_{r,K,r+1}$. If $r = 0$, then $Z_1$ is a root of $\G_{0,K,1}$.

In the following we assume that $r > 0$. The case $r=0$ is similar and we
leave it to the reader. Let $S$ be an arbitrary semigroup of class $c$ such 
that $KS$ is in $\G_{r,K,r+1}$. We show by induction on $|S|$ that there 
exists a path from $KZ_{r+2}$ to $KS$. As $Z_{r+2}$ is the only semigroup 
of coclass $r$, order at most $r+2$ and minimal generator number $r+1$, we 
may assume that $|S| > r+2$. From $|S \setminus S^2|=r+1$ follows $|S^2|=c$ 
and hence $|S^c|=2$. Thus $S/S^c$ is a semigroup of coclass $r$ with minimal 
generator number $r+1$ and with $|S| - 1$ elements. This yields that there 
is an edge from $KS/(KS)^c \cong K (S/S^c)$ to $KS$. By induction, there 
exists a path from $KZ_{r+2}$ to $K (S/S^c)$ and hence to $KS$. This proves 
that  $\G_{r,K,r+1}$ is connected.

The infinite dimensional algebra $M_{K,r}$ has coclass $r$ and minimal
generator number $r+1$ and it is a contracted semigroup algebra. It defines 
a maximal infinite path in $\G_{r,K,r+1}$. It remains to show that this 
maximal infinite path is unique. Let $A$ be an arbitrary infinite dimensional 
assocative algebra of coclass $r$ with $r+1$ generators. Then $\dim(A/A^2) 
= r+1$ and $\dim(A^i/A^{i+1}) = 1$ for every $i \geq 2$. Let $v,w,x\in A$ 
such that $vwxA^4$ generates $A^3/A^4$. Then both $vwA^3$ and $wxA^3$ 
generate $A^2/A^3$ and hence $vw = kwx$ for some $k\in K$. This yields 
$vwx=kwxx$ and hence $x^2A^3$ is a generator of $A^2/A^3$. By induction, 
it follows that $x^iA^{i+1}$ is a generator of $A^i/A^{i+1}$ for every 
$i\geq 2$. Now choose elements $x_1, \ldots, x_r \in A$ that together with 
$x$ correspond to a basis of $A/A^2$. Then these elements generate $A$. 
A basis of $A^2$ has the form $\{ x^j \mid j \geq 2\}$. Thus for $i \in 
\{1, \ldots, r\}$ we find that 
\[ x x_i = \sum_{j=2}^\infty k_{ij} x^j \in A^2. \]
We replace $x_i$ by 
\[ y_i = x_i - \sum_{j=2}^\infty k_{ij} x^{j-1}\]
and thus obtain a new minimal generating set $x, y_1, \ldots, y_r$ of $A$ which
satisfies $x y_i = 0$ by construction. For $i,j \in \{1,\ldots, r\}$
and consider the product $y_i y_j$. Then $y_i y_j = \sum_{h=2}^\infty k_h x^h 
\in A^2$. As $x y_i = 0$, it follows that $x y_i y_j = 0$ and thus 
$\sum_{h=2}^\infty k_h x^{h+1} = 0$. This implies that all coefficients 
$k_h$ equal $0$ and hence $y_i y_j = 0$ for every $i, j \in \{1, \ldots, r\}$. 
This yields that $A \cong M_{K,r}$.
\end{proof}

\section{The graph $\G_{0,K}$}
\label{cc0}

The semigroups of coclass $0$ are well-known; for every order $n\in\N$ there
exists exactly one such semigroup with presentation $\langle u\mid 
u^n=u^{n+1}\rangle$. Together with the result from
Theorem~\ref{mingen} this leads to the following theorem.

\begin{theorem}
Let $K$ be an arbitrary field. The graph $\G_{0,K}$ consists of a
unique maximal coclass tree with root $K \!Z_1$. This tree is strongly
virtually periodic with strong defect $1$, strong period $1$, and the
single associated polynomial $f_{K\!Z_1}(x) = 1$.
\end{theorem}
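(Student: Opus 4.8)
The plan is to obtain the statement as a specialisation of Theorem~\ref{mingen} to $r = 0$, combined with the classification of the coclass~$0$ semigroups recalled at the beginning of this section. First I would observe that a nilpotent semigroup of coclass~$0$ has minimal generator number~$1$, so that $\G_{0,K} = \G_{0,K,1}$. Theorem~\ref{mingen} (with $r=0$) then already yields that $\G_{0,K}$ consists of a single maximal coclass tree $\T$ with root $KZ_1$ and associated infinite dimensional algebra $M_{K,0} = I_K$. What remains is to make $\T$ and its labels explicit and to read off a strong defect, a strong period and the associated polynomial.

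For the explicit description I would use that for each order $i \in \N$ there is exactly one semigroup $S_i = \langle u \mid u^i = u^{i+1}\rangle$ of coclass~$0$, of class $i-1$, and that its contracted semigroup algebra $A_i := KS_i$ is the $(i-1)$-dimensional truncated algebra $I_K/I_K^{\,i}$. In particular $A_1 = KZ_1$, $A_2 = KZ_2 = L_K$, and $\G_{0,K}$ has exactly one vertex $A_i$ of each class $i-1 \geq 0$. A direct computation gives $A_{i+1}/A_{i+1}^{\,i} \cong A_i$, so there is an edge $A_i \ra A_{i+1}$; since any edge of $\G_{0,K}$ joins a vertex of class $c-1$ to one of class $c$ and there is a unique vertex in each class, these are all the edges. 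Hence $\T$ is the single infinite ray $A_1 \ra A_2 \ra \cdots$ with no side branches, and $\T(A_i)$ is the ray starting at $A_i$ for every $i$.

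To verify strong virtual periodicity I would take $l = 1$ and $k = 1$ and let $\mu\colon \nlab{\T}(A_1) \ra \nlab{\T}(A_2)$ be the shift $A_i \ms A_{i+1}$; as both trees are infinite rays this is an isomorphism of unlabelled trees. Then $\T(A_1) \setminus \T(A_2) = \{A_1\}$ consists of the single vertex $A_1 = KZ_1$, so there is exactly one infinite family $(A_1, A_2, A_3, \ldots)$ and a single associated polynomial $f_{KZ_1}$. Since $S_i$ is the unique coclass~$0$ semigroup with contracted algebra $A_i$, every vertex carries label~$1$; thus the label of $\mu^i(A_1) = A_{i+1}$ equals $1$ for all $i$, giving the constant polynomial $f_{KZ_1}(x) = 1$. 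I do not expect a genuine obstacle here: once the one-semigroup-per-order classification is in hand, the absence of side branches and the constancy of the labels are immediate, and the whole statement reduces to bookkeeping on top of Theorem~\ref{mingen}; the only point requiring a little care is the identification of $KS_i$ with the truncated algebra $I_K/I_K^{\,i}$, which fixes both the vertex set and the edges.
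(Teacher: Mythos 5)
Your proposal is correct and follows essentially the same route as the paper, which derives this theorem directly from the one-semigroup-per-order classification of coclass~$0$ semigroups together with Theorem~\ref{mingen}; you merely spell out the bookkeeping (identification of $KS_i$ with $I_K/I_K^{\,i}$, the single-ray structure, and the constant labels) that the paper leaves implicit.
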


\section{The graph $\G_{1,K}$}
\label{cc1}

We determine the graph $\G_{1,K}$ for arbitrary fields $K$ using the 
classification \cite{Dis10, Dis11} of nilpotent semigroups of coclass $1$.
As preliminary step, note that a nilpotent semigroup of coclass $1$ has 
at least $3$ elements. Up to isomorphism there exist exactly one
semigroup of coclass  $1$ with $3$ elements, namely $Z_3$, and nine
semigroups with $4$ elements.
\begin{theorem}
Let $K$ be an arbitrary field. 
\begin{items}
\item[\rm (1)]
The graph $\G_{1,K}$ consists of a unique maximal coclass tree $\T$
with root $K \!Z_{3}$ and corresponding infinite dimensional algebra
$M_{K,1}$ (defined in \eqref{eq_MKr}).
\item[\rm (2)]
The tree $\T$ is strongly virtually periodic with strong defect $2$ and 
strong period $2$. Let $A_1 \ra A_2 \ra \ldots$ denote the maximal infinite 
path of $\T$. For each algebra $B\in \T(A_2)\setminus\T(A_4)$
the polynomial corresponding to $B$ has degree at most $1$.
\begin{items}
\item[\rm (a)]
If $\sqrt{-1} \in K$, then $\T(A_2)\setminus\T(A_4)$ consist of $A_2$,
$4$ algebras with $A_2$ as parent, and $3$ algebras with $A_3$ as
parent; see the right box of Figure~\ref{figcc1}. 
\item[\rm (b)]
If $\sqrt{-1} \in K$, then $\T(A_2)\setminus\T(A_4)$ consist of $A_2$,
$4$ algebras with $A_2$ as parent, and $4$ algebras with $A_3$ as
parent; see the left box of Figure~\ref{figcc1}. 
\end{items}
\end{items}
\end{theorem}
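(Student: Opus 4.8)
The plan is to deduce part~(1) directly from Theorem~\ref{mingen}, and to prove part~(2) by setting up a normal form for the coclass-$1$ algebras that is uniform in the class, then reading off the tree and its labels from the classification of coclass-$1$ semigroups in \cite{Dis10, Dis11}.

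For part~(1) I would first note that a nilpotent semigroup of coclass $1$ has minimal generator number exactly $2$: it is at most $r+1=2$ by the bound quoted in Section~\ref{sec_mingen}, and it cannot equal $1$, since a monogenic nilpotent semigroup $\langle u \mid u^{n+1}=0\rangle$ has $n$ nonzero elements and class $n$, hence coclass $0$. Thus $\G_{1,K}=\G_{1,K,2}=\G_{1,K,r+1}$, and Theorem~\ref{mingen} with $r=1$ gives at once that $\G_{1,K}$ is a single maximal coclass tree with root $K\!Z_3$ and corresponding infinite dimensional algebra $M_{K,1}$.

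For part~(2) the key structural step is a normal form. Given a coclass-$1$ algebra $A$ of class $c\geq 2$, the argument already used in the proof of Theorem~\ref{mingen} produces a generator $x$ with $x^i$ spanning $A^i/A^{i+1}$ for $2\leq i\leq c$, together with a second generator $y$ normalised so that $xy=0$. Associativity, applied to $x(yx)=(xy)x=0$ and $x(y^2)=(xy)y=0$, then forces $yx=a\,x^c$ and $y^2=b\,x^c$ for scalars $a,b\in K$, so $A$ is encoded by the pair $(a,b)\in K^{2}$. A direct computation shows that the residual change-of-generator freedom acts by $(a,b)\mapsto(\delta\alpha^{1-c}a,\ \delta^{2}\alpha^{-c}b)$ with $\alpha,\delta\in K^{*}$, and that $A/A^c$ always corresponds to $(0,0)$. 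Hence every class-$c$ algebra has parent $A_{c-1}$, and a nonzero pair $(a,b)$ yields a leaf; so $\T$ is a caterpillar consisting of the maximal infinite path of the $(0,0)$-algebras $A_1\ra A_2\ra\cdots$ with the nonzero-$(a,b)$ algebras hanging off each $A_{c-1}$ as leaves. This reduces part~(2) to counting, for each class $c$, the isomorphism classes of realisable $(a,b)$ and the semigroups in each.

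I would then restrict to contracted semigroup algebras: passing through the monomial (semigroup) basis and the normalisation $xy=0$ produces the admissible values of $(a,b)$, and the number of isomorphism classes is governed by the action above. Here the parity of $c$ enters through the exponents $1-c$ and $-c$: the scalings available for the $b$-coordinate range over $(K^{*})^{2}(K^{*})^{c}=(K^{*})^{\gcd(2,c)}$, which is all of $K^{*}$ for $c$ odd but only the squares for $c$ even. This is the source both of period $2$ and of the dependence on $\sqrt{-1}$, since for even class the values $b=1$ and $b=-1$ lie in one orbit exactly when $-1$ is a square; this accounts for the single extra vertex with parent $A_3$ (class $4$) when $\sqrt{-1}\notin K$, while the vertices with parent $A_2$ (class $3$) are field-independent. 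Matching the shift $c\mapsto c+2$ gives the graph isomorphism $\mu:\nlab{\T}(A_2)\ra\nlab{\T}(A_4)$, establishing defect $2$ and period $2$; the classes $c=1,2$ are genuinely exceptional, which is why the defect is $2$ and not $1$. Finally, for a vertex $B$ in a family $B,\mu(B),\mu^{2}(B),\dots$ I would count the semigroups $S$ with $KS\cong B$ by tracking the position of the single free exponent in the semigroup presentation, which ranges over an interval of length growing linearly in the family index $i$, so the label equals a polynomial $f_B(i)$ of degree at most $1$. The main obstacle is making all of this uniform in $c$ rather than merely verified up to the computed depth: the normal form makes the list of algebras uniform, but one must check that the orbit count, the $\sqrt{-1}$-splitting at even class, and the linear growth of labels persist simultaneously for every $c$, and that the exceptional behaviour is confined to $c\leq 2$. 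Since \cite{Dis10, Dis11} give a complete finite description of the coclass-$1$ semigroups, I expect this bookkeeping to close the argument.
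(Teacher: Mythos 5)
Your proposal is essentially correct, but it organises the heart of the argument quite differently from the paper. For part (1) you and the paper agree: both reduce to Theorem~\ref{mingen} after noting that coclass $1$ forces exactly two generators. For part (2) the paper works directly from the explicit list of coclass-$1$ semigroups ($H_k$, $J_k$, $X$, $N_1,\dots,N_4$), exhibits concrete isomorphisms between their contracted semigroup algebras (e.g.\ $\mu(u)=u+u^{k-1}$, $\mu(v)=u+v$ for $KH_2\cong KH_k$), and separates the remaining algebras by invariants (commutativity, dimensions of annihilator and right annihilator, and an explicit coefficient computation showing $KX\not\cong KJ_{n-1}$ when $\sqrt{-1}\notin K$). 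You instead put every class-$c$ algebra into a normal form $yx=ax^c$, $y^2=bx^c$, $xy=0$ and classify by orbits of $(a,b)$ under change of generators. Your parametrisation is correct (one can check that the full residual freedom, including the off-diagonal coefficient $\beta$ and corrections in $A^2$, really does act only through $(a,b)\mapsto(\delta\alpha^{1-c}a,\delta^2\alpha^{-c}b)$), and it reproduces the paper's data: $(0,0)$ is the path algebra $KH_k$, $(0,1)$ is $KJ_{n-1}$, $(0,-1)$ is $KX$, $(\pm1,1)$ are $KN_1,KN_2$, $(\pm1,0)$ are $KN_3,KN_4$, and your $(K^*)^2(K^*)^c$ observation is a genuinely cleaner explanation of both the period $2$ and the role of $\sqrt{-1}$ than the paper's ad hoc coefficient inspection. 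Two points need care to close your argument. First, your parity analysis of the scaling group applies to the stratum $a=0$; for $a\neq 0$ normalising $a=1$ leaves only scaling by $(K^*)^{c-2}$ on $b$, so you must check separately (it works out, since the realisable pairs there are only $(\pm1,0)$ and $(\pm1,1)$, giving two field-independent orbits). Second, your scheme classifies all coclass-$1$ nilpotent algebras, whereas the graph only contains contracted semigroup algebras; determining which pairs $(a,b)$ are realised, and computing the labels, still requires the semigroup classification of \cite{Dis10,Dis11} as external input, exactly as in the paper. With those two points filled in, your route is a valid and arguably more conceptual alternative to the paper's proof.
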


\begin{figure}[thb]
\begin{center}
\includegraphics[scale=0.5]{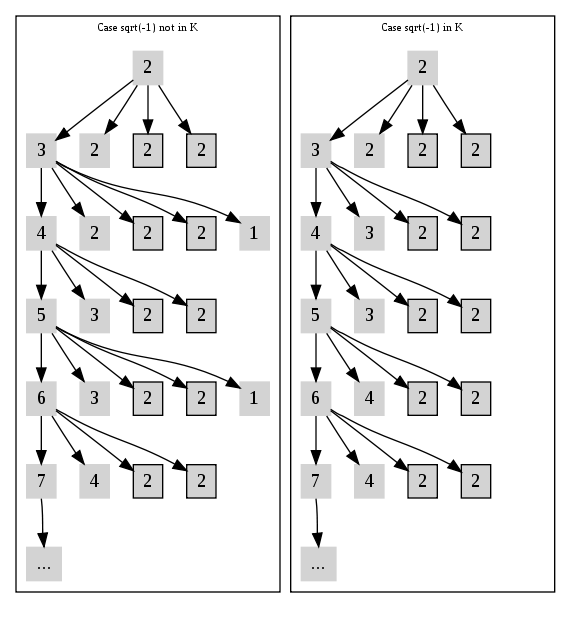}
\end{center}
\vspace{-1cm}
\caption{Description of $\T(A_2)$ in $\G_{1,K}$ with root of 
dimension $3$. Vertices with box correspond to non-commutative algebras.
The polynomials of degree $1$ are $2x+2$ and $2x+3$ for the two families 
on the infinite path and $x+2$ for the other two families.}
\label{figcc1}
\end{figure}

\begin{proof}
The first part of the statement is true by Theorem \ref{mingen}. To prove 
the second part we use the classification from \cite{Dis11}: there are the 
following $n+2+ \lfloor n/2 \rfloor$  isomorphism types of semigroups of 
order $n$ and coclass $1$ for $n \geq 5$:
\begin{items}
\bulit 
$H_k = \langle u,v \mid 
u^{n-1}=u^n, uv=u^k, vu=u^k,v^2=u^{2k-2} \rangle, 2\leq k\leq n-1$;
\bulit
$J_k=\langle u,v \mid 
u^{n-1}=u^n, uv=u^k, vu=u^k,v^2=u^{n-2} \rangle, n/2 < k\leq n-1$;
\bulit
$X = \langle u,v \mid u^{n-1}=u^n, uv=u^{n/2}, vu=u^{n/2},v^2=u^{n-1}
  \rangle, \mbox{ if }n \equiv 0 \mod 2$;
\bulit
$N_1=\langle u,v \mid u^{n-1}=u^n, uv=u^{n-1}, vu=u^{n-2},v^2=u^{n-2} \rangle$;
\bulit
$N_2=\langle u,v \mid u^{n-1}=u^n, uv=u^{n-2}, vu=u^{n-1},v^2=u^{n-2} \rangle$;
\bulit
$N_3=\langle u,v \mid u^{n-1}=u^n, uv=u^{n-1}, vu=u^{n-2},v^2=u^{n-1} \rangle$;
\bulit
$N_4=\langle u,v \mid u^{n-1}=u^n, uv=u^{n-2}, vu=u^{n-1},v^2=u^{n-1} \rangle$.
\end{items}

We now show which of these semigroups give rise to isomorphic algebras.
\medskip 

Show that $K H_2 \cong K H_k$ for $3 \leq k \leq n-1$ holds.\\
Define $\mu : K H_2 \ra K H_k$ via $\mu(u) = u+u^{k-1}$ and $\mu(v) = u+v$.
As $(u+u^{k-1})^m = \sum_{i=0}^m {m \choose i} u^{m-i} (u^{k-1})^i = 
\sum_{i=0}^m {m \choose i} u^{m+(k-2)i}$ for $1\leq m\leq n-1$, it follows
that the elements $u+u^{k-1}$ and $u+v$ generate $K H_k$. The images of 
$u$ and $v$ under $\mu$ satisfy the relations of $H_2$ and hence $\mu$ 
induces an epimorphism. As $K H_k$ and $K H_2$ have the same dimension, 
it follows that $\mu$ is an isomorphism.
\medskip

Show that $K J_{n-1} \cong K J_k$ for $n/2 < k < n-1$ and 
$K X \cong K J_{n-1}$ if $n$ is even and $\sqrt{-1} \in K$ hold.\\
For the first part, define $\mu : K J_{n-1} \ra K J_k$ via $\mu(u) = u$ 
and $\mu(v) = v-u^{k-1}$. For the second part, define $\mu : K X \ra 
K J_{n-1}$ via $\mu(u) = u$ and $\mu(v) = u^{n/2-1} - \sqrt{-1} v$. Then 
as above, it follows that $\mu$ extends to an isomorphism. 
\medskip

Show that $K N_1 \cong K N_2$ and $K N_3 \cong K N_4$ hold.\\
Note that $(N_1,N_2)$ and $(N_3,N_4)$ are pairs of 
anti-isomorphic semigroups. For each $i \in \{1, \ldots, 4\}$, the 
subsemigroup $\langle u, u^{n-3}-v\rangle$ yields a basis of $K N_i$ 
and is isomorphic to the dual semigroup of $N_i$. Hence $K N_1 \cong 
K N_2$ and $K N_3 \cong K N_4$ follow.
\medskip

It remains to show that we have determined all isomorphisms. First, 
we consider $K H_{n-1}$ and $K J_{n-1}$. These are both 
commutative algebras; the first has an annihilator of dimension 2
generated by $v$ and $u^{n-2}$ and the second has an annihilator of
dimension 1 generated by $u^{n-2}$. Hence the algebras are non-isomorphic.
Secondly, we consider  $K N_1$ and $K N_3$. These are both
non-commutative algebras and they both have an annihilator of dimension 1; 
the first has a right annihilator of dimension 2 generated by $v$ 
and $u^{n-2}$ and the second has a right annihilator of dimension 1
generated by $u^{n-2}$. Hence the algebras are non-isomorphic. This
proves our claim in the case $\sqrt{-1} \in K$ or $n$ odd. In the case 
$\sqrt{-1} \not \in K$ and $n$ even, there is the additional algebra 
$K X$. This is a commutative algebra whose annihilator has dimension 1;
hence we have to distinguish $K X$ from $K J_{n-1}$. Assume that $\mu : 
K X \ra K J_{n-1}$ is an isomorphism and denote $\mu(v) = av + 
\sum_{i=1}^{n-2} b_i u^i \in K J_{n-1}$. Then $\mu(v)^2 = a^2 v^2+
(\sum_{i=1}^{n-2} b_i u^i)^2$ in $K J_{n-1}$, as $uv = vu = 0$ holds. 
Note that $v^2 = u^{n-2}$ in $K J_{n-1}$ and $\mu(v)^2 = \mu(v^2) = 
0\in K J_{n-1}$ as $v^2=0$ in $KX$. 
An inspection of the coefficients now shows that $b_i = 0$ for $1 \leq i 
\leq n/2-2$. The coefficient of $u^{n-2}$ in $\mu(v)^2$ thus is 
$a^2+b_{n/2-1}^2$. Since $\sqrt{-1} \not \in K$, it follows that 
$a = b_{n/2-1} = 0$. This yields that $\mu(v) \in \langle u^{n/2}, 
u^{n/2+1}, \ldots, u_{n-2} \rangle \leq (K J_{n-1})^2$. Hence $\mu$ 
is not surjective, a contradiction.
\medskip

We determine the edges of $\G_{1,K}$. Consider a semigroup $S$ of order
$n$ from the above classification. In the quotient $S/S^{n-2}$ the two
elements $u^{n-2}$ and $u^{n-1}$ are identified, and hence the quotient 
is isomorphic to a semigroup of type $H_k$ of order $n-1$. Note that the 
later is valid for $n=5$ also, as the semigroups $H_k$ can be defined for 
order $4$ as well.
\medskip

The labels of the vertices in $\G_{1,K}$ follow immediately from the
classification. This implies that $\G_{1,K}$ has strong defect 2 and
strong period 2. (In fact, both values are minimal.)
\end{proof}

Images of the parts of $\G_{1,K,2}$ corresponding to semigroups
of order at most $12$ for $K = GF(p)$ with $p \leq 23$ can be found
at~\cite{homepage}.
 
\section{Computational experiments for $\G_{2,K}$}
\label{cc2}

A classification of semigroups of coclass $2$ is available in
\cite{Dis10,Dis11}. We
used it to investigate $\G_{2,K}$ computationally, applying the
isomorphism test for associative nilpotent algebras over finite fields
in~\cite{Eic07}. Semigroups of coclass $2$ have a minimal
generating set of size $2$ or $3$. We know from Section~\ref{sec_mingen}
that these two cases lead to independent subgraphs $\G_{2,K,2}$ and
$\G_{2,K,3}$ of $\G_{2,K}$ which shall be considered separately.

We have determined the part of $\G_{2,K,2}$ corresponding to semigroups
of order at most $12$ for $K = GF(p)$ with $p \leq 23$. Images of the
graphs are available at~\cite{homepage}. We conjecture
that for every field $K$ the graph $\G_{2,K,2}$ has five maximal infinite
paths which are described by the following infinite dimensional
algebras:
\begin{items}
\bulit
$\langle a, b \mid b^2 = ba = a^2b = 0 \rangle$
with annihilator $\langle ab \rangle$;
\bulit
$\langle a, b \mid b^2 = ab = ba^2 = 0 \rangle$
with annihilator $\langle ba \rangle$;
\bulit
$\langle a, b \mid b^3 = ab = ba = 0 \rangle$
with annihilator $\langle b^2 \rangle$;
\bulit
$\langle a, b \mid b^2 = aba = 0, ab = ba \rangle$
with annihilator $\langle ba \rangle$;
\bulit
$\langle a, b \mid b^2 = ba, ab = b^2a = 0 \rangle$
with annihilator $\langle ba \rangle$.
\end{items}
Using these presentations to define semigroups with zero we obtain
infinite semigroups that are annihilator extensions of the semigroup
underlying $M_{K,1}$ and whose contracted semigroup algebras are the
algebras defined by the presentations. 
If the conjecture on the number of infinite paths holds, then
$\G_{2,K,2}$ contains five maximal coclass
trees. Figure~\ref{figcc2d2} exhibits the respective trees of the
computed graph for $K=GF(5)$.
Furthermore our computational evidence suggests the following:

\begin{items}
\item[$\bullet$]
the graph $\G_{2,GF(p),2}$ depends on $p \bmod 4$ only;
\item[$\bullet$]
the vertices in $\G_{2,GF(p),2}$ outside a maximal coclass tree have 
dimension $4$ or $5$;
\item[$\bullet$]
the roots of the maximal coclass trees of $\G_{2,GF(p),2}$ have dimension $4$;
\item[$\bullet$]
each maximal coclass tree in $\G_{2,GF(p),2}$ is strongly virtually
periodic; one tree has strong defect $1$ and strong period $1$, the
other four trees have strong defect $2$ and strong period $2$;
\item[$\bullet$]
the arising strong defects and strong periods are independent of the field.
\end{items}

Additionally the polynomials describing the labels in the
periodic parts of the maximal coclass trees have degree at most $1$, a
fact that follows from the classification in~\cite{Dis10,Dis11}.

\begin{figure}[thb]
\begin{center}
\includegraphics[scale=0.4]{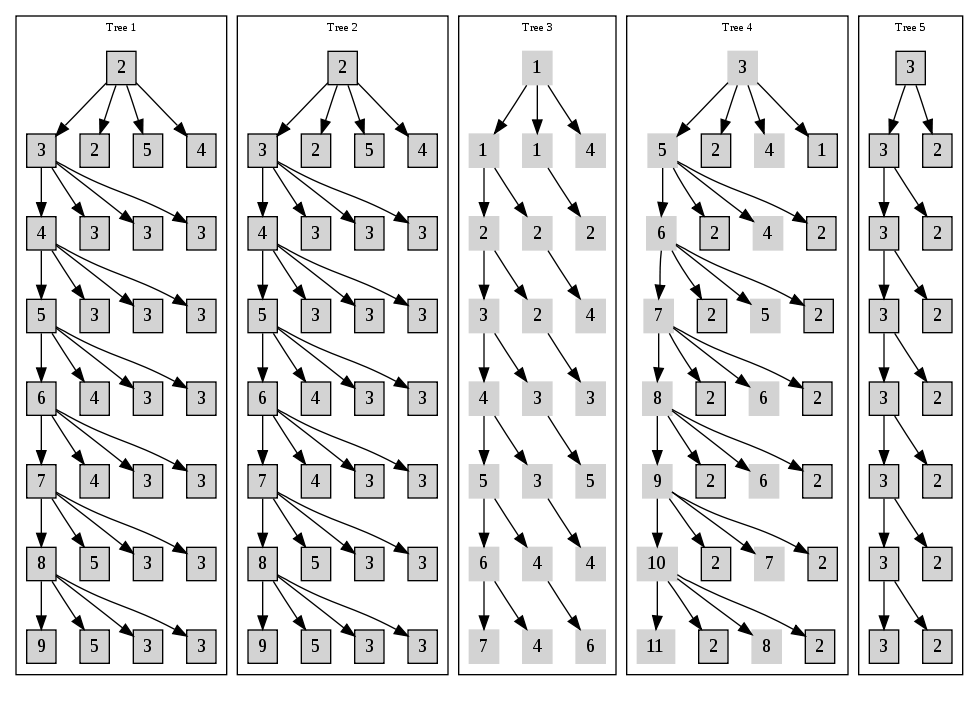}
\end{center}
\vspace{-1cm}
\caption{Maximal coclass trees in $\G_{2,GF(5),2}$ up to depth 12.}
\label{figcc2d2}
\end{figure}

The graph $\G_{2,K,3}$ is known to consist of a single maximal coclass
tree with root $KZ_4$ and infinite paths corresponding to $M_{K,2}$ by
Theorem~\ref{mingen}. We have determined the part of $\G_{2,K,3}$
corresponding to semigroups of order at most $12$ for $K = GF(p)$ with
$p \leq 5$. Images of the graphs are available at~\cite{homepage}.
In all three cases we found the graph to appear strongly
virtually periodic with strong defect $2$, and strong period $2$. In
accordance with the results from~\cite{Dis10,Dis11} the labels can be
described by quadratic polynomials.

\section{Computational experiments for $\G_{3,K}$}
\label{cc3}

For the semigroups of coclass $3$ there is no general classification known. 
We computed the semigroups of coclass $3$ and order at most $17$ up to 
isomorphism using the code provided in~\cite[Appendix C]{Dis10}. Then we
determined the part of $\G_{3,K,2}$ correspondig to these semigroups for 
$K = GF(p)$ with $p \leq 23$. Images of the graphs are available 
at~\cite{homepage}. We summarise our observations:
\begin{items}
\bulit
the graph $\G_{3,GF(p),2}$ depends on $p \bmod 4$ only;
\bulit
the graph $\G_{3,GF(p),2}$ has $15$ maximal infinite paths of which
$4$ correspond to commutative algebras; 
\bulit
the vertices in $\G_{3,GF(p),2}$ outside a maximal coclass tree have 
dimension $6$, $7$ or $8$;
\bulit
the roots of the maximal coclass trees of $\G_{3,GF(p),2}$ have
dimension $5$, $6$ or $7$;
\bulit
the maximal coclass trees in $\G_{3,GF(p),2}$ are strongly virtually
periodic with strong defect at most $3$ and strong period at most $6$;
\bulit
the arising strong periods are independent of the field;
\bulit
the polynomials describing the labels have degree at most $1$.
\end{items}

These observations for $\G_{3,GF(p),2}$ are of particular
interest as this is the first case in which some of the semigroups
contain products of three elements that lie in different monogenic
subsemigroups. In fact, $\G_{3,GF(p),2}$ has in many aspects more 
complex features than all other considered graphs.

We have investigated the part of $\G_{3,K,3}$ correspondig to semigroups of
order at most $12$ for $K = GF(2)$ only. There appear to be $21$
maximal infinite paths in $\G_{3,GF(2),3}$ with $5$ paths
corresponding to commutative algebras. 

The graph $\G_{3,K,4}$ has $1$ maximal infinite paths corresponding
to the commutative algebra $M_{K,3}$ by Theorem~\ref{mingen}.

\section{Concluding comments}
\label{final}

Similar to the graphs $\G_{r,K}$ one can define a graph $\G_r$ whose 
vertices correspond one-to-one to the isomorphism types of semigroups 
of coclass $r$. Two vertices are adjoined by a directed edge $T \ra S$
if $S/S^c \cong T$ where $c$ is the class of $S$.
It follows directly from \cite[Lemma 3.1]{Dis11} that the graph $\G_r$
does not have finite width (unless $r=0$).

The additional use of the contracted semigroup algebras in the definition
of $\G_{r,K}$ induces a dependence on the underlying field $K$, but it has 
the significant advantage that the graphs $\G_{r,K}$ seem to have finite 
width and exhibit periodic patterns which can be described in a compact way. 
Further, the considered field $K$ seems to have no influence on the important
aspects of the periodicity. 

\def\cprime{$'$}

\end{document}